\newtheorem{theorem}{Theorem}[section]
\newtheorem{claim}{Claim}[theorem]
\newtheorem{lemma}[theorem]{Lemma}
\newtheorem{corollary}[theorem]{Corollary}
\newtheorem{conjecture}[theorem]{Conjecture}
\theoremstyle{definition}
\newcommand{\bR}{\mathbb R}
\newcommand{\bZ}{\mathbb Z}
\newcommand{\bQ}{\mathbb Q}
\newcommand{\cW}{\mathcal{W}}
\newcommand{\cX}{\mathcal{X}}
\DeclareMathOperator{\cl}{cl}
\newcommand{\del}{\!\setminus\!}
\newcommand{\con}{/}
\newcommand{\pr}{\mathbf{Pr}}
\newcommand{\ex}{\mathbf{E}}
\newcommand{\var}{\mathbf{Var}}
\title{Doubly exponentially many Ingleton matroids}
\author{Peter Nelson}
\address{Department of Combinatorics and Optimization, University of Waterloo, Waterloo, Canada}
\author{Jorn van der Pol}
\address{Department of Mathematics and Computer Science, Eindhoven University of Technology, Eindhoven, The Netherlands}
\thanks{The first author was supported by a grant from the Natural Sciences and Engineering Research Council of Canada}
\thanks{The second author was supported by the Netherlands Organisation for Scientific Research (NWO) grant~613.001.211.}
\begin{document}

\begin{abstract}A matroid is \emph{Ingleton} if all quadruples of subsets of its ground set satisfy Ingleton's inequality. In particular, representable matroids are Ingleton. We show that the number of Ingleton matroids on ground set $[n]$ is doubly exponential in $n$; it follows that almost all Ingleton matroids are non-representable.
\end{abstract}

\maketitle

\section{Introduction}

Ingleton's condition~\cite{Ingleton1971} is a well-known linear inequality that holds for representable matroids but not matroids in general; it states that for all $A,B,C,D \subseteq E$ in a representable matroid $M = (E,r)$, 
\begin{multline*}
r(A \cup B) + r(A \cup C) + r(A \cup D) + r(B \cup C) + r(B \cup D) \\
\ge r(A) + r(B) + r(A \cup B \cup C) + r(A \cup B \cup D) + r(C \cup D).
\end{multline*}

An arbitrary matroid is \emph{Ingleton} if the above inequality is satisfied for all choices of $A,B,C,D$.  The class of Ingleton matroids is closed under minors and duality (see, for example, Lemmas~3.9 and~4.5 in~\cite{Cameron14}) and clearly all representable matroids are Ingleton. A natural question is to what extent a converse of this last statement holds: that is, do Ingleton matroids tend to be representable? We prove here that they do not. For $n \ge 12$, the number of representable matroids on $[n]$ is at most $2^{0.25n^3}$~\cite{Nelson2016}; our main result is the following. 
\begin{theorem}\label{main}
	For all sufficiently large $n$ and all $0 < r < n$, the number of Ingleton matroids with ground set $[n]$ is at least $2^{0.486 \tfrac{\log (r(n-r))}{r(n-r)}\binom{n}{r}}$. 
\end{theorem}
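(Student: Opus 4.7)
The plan is to exhibit many Ingleton matroids by restricting to a suitable subclass of sparse paving matroids. Recall that Knuth's correspondence assigns to each independent set $\cH \subseteq \binom{[n]}{r}$ in the Johnson graph $J(n,r)$ (vertices the $r$-subsets of $[n]$, with two adjacent when their symmetric difference has size two) a sparse paving matroid $M_\cH$ of rank $r$ on $[n]$ whose circuit-hyperplanes are precisely the members of $\cH$. Not every such matroid is Ingleton---the V\'amos matroid is the classical obstruction---so I must identify an appropriate subfamily.

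The first step is to write the Ingleton expression $\Delta_{M_\cH}(A,B,C,D) := \mathrm{LHS} - \mathrm{RHS}$ for $M_\cH$ in terms of the corresponding quantity for the uniform matroid $U_{r,n}$. Using $r_{M_\cH}(X) = r_{U_{r,n}}(X) - \chi_\cH(X)$, where $\chi_\cH(X) = 1$ iff $|X| = r$ and $X \in \cH$, one obtains
\[
\Delta_{M_\cH}(A,B,C,D) = \Delta_{U_{r,n}}(A,B,C,D) + \sum_{\text{RHS sets}} \chi_\cH - \sum_{\text{LHS sets}} \chi_\cH .
\]
Since $U_{r,n}$ is representable and hence Ingleton, $\Delta_{U_{r,n}} \geq 0$; the matroid $M_\cH$ is therefore Ingleton provided the negative correction never exceeds $\Delta_{U_{r,n}}$. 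A case analysis by the sizes of $A,B,C,D$ and of the ten sets appearing in Ingleton's inequality identifies the small sub-configurations of $\cH$ that can violate this bound, the V\'amos configuration being the canonical obstruction. Call $\cH$ \emph{Ingleton-admissible} if it avoids all such sub-configurations.

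The second step is to produce an Ingleton-admissible $\cH$ with $|\cH| \geq 0.486 \frac{\log(r(n-r))}{r(n-r)} \binom{n}{r}$. Standard constructions of independent sets in $J(n,r)$ (Graham--Sloane-type partitions refined by probabilistic arguments of R\"odl-nibble type) already yield independent sets of this density; an alteration argument then deletes a negligible fraction of $\cH$ to clear the forbidden sub-configurations without affecting the asymptotic size. Since every sub-family of an admissible $\cH$ is itself admissible, the $2^{|\cH|}$ sub-families give that many distinct sparse paving matroids, each Ingleton.

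The main obstacle is the first step: the Ingleton inequality has ten rank terms, and pinning down precisely which finite sub-configurations of members of $\cH$ can force $\Delta_{M_\cH}$ below zero requires a delicate case analysis. The resulting sufficient condition must be strong enough to rule out V\'amos-like sub-families, yet mild enough to admit an $\cH$ of the claimed density; if the bad sub-configurations turn out to involve very many members of $\cH$ then they become rare enough for the alteration argument in the second step to cost only lower-order terms.
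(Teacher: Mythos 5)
Your overall plan — reduce to sparse paving matroids via Knuth's correspondence, characterize the local obstructions to Ingleton, and then count — is the right frame and matches the paper's. But there are two gaps, one of which is fatal to the approach as written.

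First, the monotonicity claim is false. The paper's characterization (Corollary~\ref{symmdiff}) says a sparse paving matroid is non-Ingleton if and only if there are disjoint $P_1,\dotsc,P_4,K$ with $|P_i|=2$, $|K|=r-4$, such that \emph{exactly five} of the six sets $K \cup P_i \cup P_j$ are circuit-hyperplanes. Having all six present is fine (indeed the Graham--Sloane matroids of Lemma~\ref{lemma:S-ingleton} contain many complete sextets). Consequently ``Ingleton-admissible'' is not closed under subsets: deleting one member of a sextet creates a forbidden quintet. Your sentence ``every sub-family of an admissible $\cH$ is itself admissible'' is therefore wrong. To salvage it you must strengthen to the condition that $\cH$ contains no configuration with five \emph{or} six of the six sets present — this is what the paper calls a \emph{good} set, and good sets are indeed closed under subsets.

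Second, and more seriously: even with the correct monotone notion, the strategy of taking a single large good $\cH$ and counting its $2^{|\cH|}$ subsets cannot produce the $\log$ factor in the exponent. Writing $N=\binom{n}{r}$ and $d=r(n-r)$, the construction in the paper only produces good sets of size $\Theta(N/d)$, not $\Theta\bigl(\tfrac{\log d}{d}N\bigr)$; and a first-moment heuristic shows that a random $\cH$ of density $\tfrac{\log d}{d}$ has about $d^4 N (\log d/d)^5 = N(\log d)^5/d$ configurations with five sets present, which overwhelms $|\cH|\approx N\log d/d$, so your alteration step would destroy the set. The Graham--Sloane independent sets are much bigger ($\approx N/n$) and have no quintets, but they have many complete sextets, so they are not good and not subset-closed either. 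The $\log d$ factor in the theorem is obtained by counting \emph{how many} good sets there are, not by exponentiating the size of a single one: the paper's double-counting argument shows there are at least $(1-o(1))\binom{N}{k}/\binom{N}{(1-\alpha)k}$ good sets of size $\ge \alpha k$ with $k\approx cN/d$, and it is $\log\binom{N}{k} \approx k\log(N/k) \approx (cN/d)\log d$ that supplies the logarithm. That counting idea — estimate the number of $k$-sets $H$ containing a large good subset and divide by the number of $H$'s a fixed good set can lie in — is the missing core of the proof and is not a routine refinement of your sketch.

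As a lesser point, the characterization in your Step~1 is not a straightforward bookkeeping of the ten rank terms; the paper needs Lemma~\ref{hall} and Lemma~\ref{atmostfour} (representability of matroids with at most four nonbases) to prove that all five ``LHS'' sets must actually be circuit-hyperplanes before the case analysis of the Venn diagram regions begins. That part of your proposal is closer to the paper in spirit, but you have underestimated its difficulty.
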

Even when $r = 4$, this eclipses the upper bound on the number of representable matroids on $[n]$ with no restriction on rank; thus, almost all Ingleton matroids are non-representable. When $r = \lfloor n/2 \rfloor$, the above formula is around $2^{\frac{1.94}{n^2}\binom{n}{n/2}}$ which is doubly exponential in $n$, and even somewhat resembles the number of all matroids on $[n]$, which is $2^{\Theta\left(\frac{1}{n}\binom{n}{\lfloor n/2 \rfloor}\right)}$  with a constant between $1$ and $2+o(1)$~\cite{BansalPendavinghvanderPol2015}. We conjecture, however, that general matroids tend not to be Ingleton:

\begin{conjecture}
	There is a constant $c$ such that the number of Ingleton matroids on $[n]$ is $2^{(c+o(1)) \tfrac{\log n}{n^2}\binom{n}{\lfloor n/2 \rfloor}}$.  
\end{conjecture}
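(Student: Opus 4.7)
The lower bound side of the conjecture, namely that the count is $2^{\Omega\left(\tfrac{\log n}{n^2}\binom{n}{\lfloor n/2 \rfloor}\right)}$, is already delivered by Theorem~\ref{main} evaluated at $r = \lfloor n/2 \rfloor$: there $\tfrac{\log(r(n-r))}{r(n-r)}\binom{n}{r}$ equals $(1+o(1))\tfrac{8\log n}{n^2}\binom{n}{\lfloor n/2 \rfloor}$, so the theorem already fixes the correct asymptotic order with constant at least $0.486 \cdot 8 = 3.888$. Pinning down the optimal leading constant would require only optimizing the construction of Theorem~\ref{main} over its free parameters. The substantive content of the conjecture is therefore the matching upper bound, which I discuss below.

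My plan for the upper bound is to combine the entropy-based encoding that underlies the general matroid upper bound of $2^{O\left(\tfrac{1}{n}\binom{n}{n/2}\right)}$~\cite{BansalPendavinghvanderPol2015} with Ingleton-specific structural restrictions. In rough terms, the general matroid bound proceeds by covering the set of non-bases by a modest family of ``clumps'' (flats or near-flats) and encoding each clump separately; my target is to show that Ingleton's inequality forces each clump to belong to a sub-family of size $n^{O(1)}$ rather than $2^{\Theta(n)}$, contributing $O(\log n)$ rather than $\Theta(n)$ bits per clump and improving the exponent by the required $\log n / n$ factor. The key structural step is to apply Ingleton's inequality to quadruples of the form $(F, \{i\}, \{j\}, \{k\})$ for a candidate non-basis cover element $F$ and elements $i, j, k \in [n]$, and deduce that $F$ is very restricted locally --- most naturally, that $F$ is a flat of a particular combinatorial type of which only polynomially many exist.

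Once this compression is established, summing the encoding lengths over all clumps gives the target bound $2^{O\left(\tfrac{\log n}{n^2}\binom{n}{n/2}\right)}$. Summing over ranks $r \ne \lfloor n/2 \rfloor$ is absorbed into the $o(1)$ term because $\binom{n}{r}$ is concentrated at $r \approx n/2$, and the Ingleton compression is preserved between dual ranks automatically from the fact that the class is minor- and dual-closed.

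The main obstacle --- and the reason this is a conjecture rather than a theorem --- is the structural step identifying the Ingleton-compressed clump family. Ingleton's inequality is a single four-term rank inequality and does not visibly restrict flats to a polynomially-sized family. A successful proof may require either amplifying Ingleton to richer inequalities via iterated application (analogous to how Shannon-type entropy inequalities cascade into stronger structural constraints), or first establishing a qualitative structure theorem for Ingleton matroids, for instance that a ``generic'' one is close to a truncation or relaxation of a representable one. As a simpler intermediate result, I would first attempt the conjecture for sparse paving Ingleton matroids, where the matroid is encoded by a matching in the Johnson graph $J(n, \lfloor n/2 \rfloor)$ and Ingleton's inequality translates into forbidden $4$-point configurations in the matching; bounding such matchings by entropy methods, in the style of~\cite{BansalPendavinghvanderPol2015}, appears to be the most tractable route to initial progress.
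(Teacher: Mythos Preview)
The statement is a \emph{conjecture}; the paper does not prove it and offers no proof to compare against. Your proposal correctly recognises this and is explicitly a research outline rather than a proof, so there is no genuine discrepancy to report at the level of ``the paper's argument versus yours.''

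A few remarks on the content of your outline. Your lower-bound computation is essentially right: Theorem~\ref{main} at $r=\lfloor n/2\rfloor$ gives an exponent of $(1+o(1))\,0.486\cdot 8\,\tfrac{\log n}{n^2}\binom{n}{\lfloor n/2\rfloor}$, so a lower bound of the conjectured order with constant about $3.888$ (the paper's own text quotes $1.94$, apparently dropping the factor~$2$ from $\log(n^2/4)\sim 2\log n$). Note, however, that ``optimising the construction of Theorem~\ref{main} over its free parameters'' cannot by itself pin down the optimal constant $c$; it can only improve the lower bound. The exact constant requires the matching upper bound you discuss next.

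On the upper-bound plan, two small corrections. First, a sparse paving matroid of rank $r$ is encoded by a \emph{stable set} in the Johnson graph $J(n,r)$, not a matching. Second, by Corollary~\ref{symmdiff} the Ingleton condition on sparse paving matroids forbids configurations of \emph{five} circuit-hyperplanes (with a sixth set forced to be a basis) in a specific $K$--$P_1$--$P_2$--$P_3$--$P_4$ pattern, not a ``$4$-point'' configuration. Beyond these, your assessment of the obstacle is accurate: there is at present no mechanism by which Ingleton's inequality is known to compress the encoding of flats or non-bases to polynomial size, and the paper offers none. The sparse paving special case you suggest as a first target is natural, but even there no upper bound of the conjectured order is known.
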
  
Conceivably, the constant $c$ could be equal to $2$, or even be the one of around $1.94$ obtained by our proof.

In what follows, we assume some familiarity with matroid theory; see~\cite{Oxley2011}. In particular, a \emph{nonbasis} of a rank-$r$ matroid $M$ is a set in $\binom{E(M)}{r}$ that is not a basis of $M$; and a matroid is \emph{paving} if all its nonbases are circuits and \emph{sparse paving} if $M$ and $M^*$ are both paving. Logarithms are all base-two.

As part of the proof of Theorem~\ref{main}, we also characterize exactly which sparse paving matroids are Ingleton, and as a result easily derive the following theorem:
\begin{theorem}\label{forty}
	There are exactly $41$ excluded minors for the class of Ingleton sparse paving matroids: the matroids $U_{0,2}\oplus U_{1,1}$ and $U_{2,2} \oplus U_{0,1}$, and the $39$ rank-$4$ non-Ingleton sparse paving matroids on eight elements. 
\end{theorem}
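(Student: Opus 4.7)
The plan is to exploit minor-closure: both sparse paving and Ingleton are minor-closed (the latter by Lemmas~3.9 and~4.5 of~\cite{Cameron14}), so the intersection is a minor-closed class, and each excluded minor $M$ is either (i) a matroid that is not sparse paving but whose proper minors are all Ingleton sparse paving, or (ii) a sparse paving matroid that is not Ingleton but whose proper minors are all Ingleton. I would split the problem along these two cases.

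For case (i), I would invoke the standard fact that the excluded minors for the class of sparse paving matroids form the dual pair $U_{0,2}\oplus U_{1,1}$ and $U_{2,2}\oplus U_{0,1}$. Each has at most three elements, so every proper minor lives on at most two elements and is therefore both sparse paving and Ingleton; hence both are excluded minors of our class, accounting for two of the $41$.

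For case (ii), I would lean on the combinatorial characterization of Ingleton sparse paving matroids derived in the proof of Theorem~\ref{main}. Using the explicit rank formula for a sparse paving matroid---namely $r(X) = \min(|X|,r)$ except that $r(X) = r-1$ when $X$ is a nonbasis---the Ingleton inequality reduces to a local condition on the collection of nonbases. A case analysis on the sizes and intersections of the witnessing $A,B,C,D$, combined with the usual reduction to pairwise-disjoint sets and the observation that any Ingleton violation restricts to a violation on $A \cup B \cup C \cup D$, should show that every Ingleton violation in a sparse paving matroid can be witnessed by the restriction to an $8$-element ground set of rank $4$; smaller supports and other ranks turn out to satisfy the inequality automatically. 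A finite enumeration over the stable sets of the Johnson graph $J(8,4)$---which parametrise the rank-$4$ sparse paving matroids on $8$ elements---then identifies exactly $39$ non-isomorphic non-Ingleton examples, each of whose proper minors lives on at most seven elements (or at rank at most three after contraction) and is therefore Ingleton by the previous step; these are the remaining $39$ excluded minors.

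I expect the main obstacle to be the localization at the heart of case (ii): verifying that every Ingleton violation in a sparse paving matroid can be restricted to an $8$-element rank-$4$ configuration. This demands a careful bookkeeping of how each rank term in the Ingleton inequality behaves under the sparse paving rank formula, ruling out degenerate configurations where some of $A,B,C,D$ overlap, are too small, or fail to contain any nonbasis---any of which could in principle produce a smaller minimal violator. Once this localization is established, the enumeration yielding the number $39$ is a short finite check.
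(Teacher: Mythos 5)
Your proposal is essentially the paper's own argument: split the excluded minors into the non-sparse-paving ones (the two excluded minors for sparse paving itself, both on three elements) and the sparse paving non-Ingleton ones, then use the structural characterization of Ingleton violations in sparse paving matroids to show that every such violation localizes to an eight-element rank-$4$ minor, and finish with a finite check identifying $39$ such matroids. The paper obtains the localization from Corollary~\ref{symmdiff} by passing to the minor $(M / K)|(P_1 \cup P_2 \cup P_3 \cup P_4)$ (citing an observation of Cameron), and cites Mayhew--Royle for the count of $39$; you propose to re-derive both the localization and the enumeration, which is fine but more work. One point to be careful about: you speak of ``the restriction to an $8$-element ground set,'' but for $r > 4$ one must \emph{contract} $K$ as well as restrict, so the witness is a genuine minor rather than a restriction; your parenthetical about ``rank at most three after contraction'' suggests you are aware of this, but the phrasing should be tightened. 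Also, the argument silently uses that sparse paving matroids are minor-closed (so that ``excluded minors for sparse paving'' makes sense and so that proper minors of the $39$ candidates are automatically sparse paving); this is standard, but worth stating.
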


The fact that this set is even finite is curious; the class of all Ingleton matroids, by contrast, has an infinite set of excluded minors, constructed in~\cite{MayhewNewmanWhittle2009}. In fact, their techniques show that every $\bR$-representable matroid is a minor of an excluded minor for the class of Ingleton matroids. 

Theorems~\ref{main} and~\ref{forty} together imply that the Ingleton matroids are a `large' minor-closed class of matroids (in the sense of numbering at least $2^{2^n/\mathrm{poly}(n)}$) that omits $39$ different sparse paving matroids.
It was conjectured in~\cite{MayhewNewmanWelshWhittle2011} that any minor-closed class not containing all sparse paving matroids is asymptotically vanishing; our result shows that such a class may still be `large'.


\section{Representing matroids with few nonbases}

\begin{lemma}\label{hall}
	Let $M$ be a rank-$r$ matroid in which each set $\cW$ of nonbases with $|\cW| > 1$ satisfies $|\cap \cW| \le r - |\cW|$. Then $M$ is $\bR$-representable. 
\end{lemma}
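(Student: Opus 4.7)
The plan is to build an explicit $\bR$-representation of $M$ out of sparse vectors whose zero/nonzero pattern is read off from the nonbases, using Hall's theorem to certify that each basis of $M$ is represented by a linearly independent set of vectors.

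First I would extract the combinatorial consequences of the hypothesis. Applying it to $\cW = \mathrm{NB}(M)$ (combined with $|\cap \cW| \ge 0$) gives $k := |\mathrm{NB}(M)| \le r$. More usefully, for every $S \subseteq [n]$ with $1 \le |S| \le r-1$, the set $\cW_S := \{W \in \mathrm{NB}(M) : W \supseteq S\}$ satisfies $|\cW_S| \le r - |S|$: either $|\cW_S| \le 1 \le r - |S|$ at once, or $|\cW_S| \ge 2$ and then $|S| \le |\cap \cW_S| \le r - |\cW_S|$ by the hypothesis.

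Next, enumerate $\mathrm{NB}(M) = \{W_1,\ldots,W_k\}$ and, for each $e \in [n]$, define $v_e \in \bR^r$ by setting its $i$-th coordinate to $0$ whenever $i \le k$ and $e \in W_i$, and to $t_{e,i}$ otherwise, where the $t_{e,i}$ are algebraically independent over $\bQ$ (realized inside $\bR$). For each nonbasis $W_j$ the matrix $[v_e]_{e \in W_j}$ has its $j$-th row identically zero, so $\{v_e : e \in W_j\}$ is linearly dependent, exactly as required. For each basis $B$ I claim that $\det [v_e]_{e \in B}$ is a nonzero polynomial in the transcendentals. By the classical Edmonds--Rado fact that the generic rank of a matrix over a given zero/nonzero pattern equals the \emph{term rank} --- the size of a maximum matching in the bipartite graph of nonzero positions --- it suffices to verify Hall's condition on the bipartite graph between $B$ and $[r]$ whose edges are the nonzero positions of $[v_e]_{e \in B}$. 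Unpacking, this Hall condition reads $|\{i \le k : W_i \supseteq S\}| \le r - |S|$ for every $S \subseteq B$, which is exactly the consequence of the hypothesis derived above. Hence the basis determinants are nonzero polynomials, and specializing the transcendentals to values away from the finitely many vanishing loci yields a representation of $M$ over $\bR$.

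The crux --- and the main obstacle --- is this translation step: it pins down why the specific bound $|\cap \cW| \le r - |\cW|$, rather than any weaker inequality, is exactly what is needed.
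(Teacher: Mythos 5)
Your proof is correct and takes essentially the same route as the paper: build a generic $r \times n$ real matrix whose zero pattern records nonbasis membership, then combine Hall's condition on the bipartite zero/nonzero-pattern graph (you verify it on the column side $B$, the paper on the row side $[r]$, an immaterial reflection) with algebraic independence of the nonzero entries to certify nonsingularity on every basis. The only loose end is the Hall check at $S = B$ itself, where $|S| = r$ falls outside your stated range $1 \le |S| \le r-1$; that case is immediate since an $r$-set $W_i \ne B$ cannot contain $B$, so $|\{i : W_i \supseteq B\}| = 0$.
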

\begin{proof}
	Let $\cX = \{X_1, \dotsc, X_t\}$ be the set of nonbases of $M$; note that $0 \le |\cap \cX| \le r-t$, so $t \le r$. Let $A$ be an $[r] \times E$ real matrix so that the nonzero entries of $A$ are algebraically independent over $\bQ$, and $A_{i,e} = 0$ if and only if $i \in [t]$ and $e \in X_i$. We prove that $M = M(A)$. It is clear that for every nonbasis $W$ of $M$, the matrix $A[W]$ has a zero row so is singular; it remains to show that $A[B]$ is nonsingular for each basis $B$ of $M$.
	
	Let $B$ be a basis of $M$. Consider the bipartite graph $G$ with bipartition $([r],B)$ for which $(i,e)$ is an edge if and only if $A_{i,e} \ne 0$. Note that each $i \in \{t+1, \dotsc, r\}$ has degree $r$ in $G$. For each $S \subseteq [r]$, let $N(S)$ denote the set of vertices in $B$ that are adjacent to a vertex in $S$. We argue that $|N(S)| \ge |S|$ for each $S \subseteq [r]$; it will follow from Hall's theorem that $G$ has a perfect matching. 
		
	Let $S \subseteq [r]$. If $S \not\subseteq [t]$ then clearly $N(S) = B$ and so $|N(S)| = r \ge |S|$. If $S \subseteq [t]$, then by hypothesis the set $\bigcap_{s \in S}X_s$ has size at most $r-|S|$, so $B$ contains at least $|S|$ elements $e$ for which there is some $s \in S$ with $e \notin X_s$. Each such $e$ is adjacent to $s$, so $|N(S)| \ge |S|$ as required. Therefore $G$ has a perfect matching. 
			
	Let $B = \{b_1, \dotsc, b_r\}$ and $S_r$ denote the symmetric group on $[r]$; note that $A[B]$ is singular if and only if the determinant $\sum_{\sigma \in S_r}\prod_{i \in [r]}A_{i,b_{\sigma(i)}}$ is zero. This determinant is a polynomial in the entries of $A$, with integer coefficients, whose nonzero monomials are algebraically independent over $\bQ$, and since $G$ has a perfect matching, some monomial is nonzero. It follows that the determinant is nonzero, so $A[B]$ is nonsingular, as required.
\end{proof}

\begin{lemma}\label{atmostfour}
	Every matroid with at most four nonbases is $\bR$-representable.
\end{lemma}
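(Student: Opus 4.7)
The plan is to verify the hypothesis of Lemma~\ref{hall} and apply it. That hypothesis is equivalent to requiring that for every $I \subseteq E(M)$ which is contained in at least two nonbases, the number $n_I$ of nonbases containing $I$ satisfies $n_I \le r - |I|$. If this holds, Lemma~\ref{hall} gives $\bR$-representability directly.

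Otherwise, take a witness $I$ with $n_I \ge 2$ and $n_I > r - |I|$. Since $n_I \le |\cX| \le 4$, we must have $|I| \ge r - 3$; the $n_I$ nonbases containing $I$ have the form $I \cup Y$ for distinct $(r-|I|)$-subsets $Y$ of $E \setminus I$, each of rank strictly less than $r$ in $M$. This is the extra information I plan to exploit.

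From here I would split into cases based on whether $I$ is dependent or independent. If $I$ is dependent, it contains a circuit $C$, and then every $r$-subset containing $C$ is a nonbasis; the count $\binom{n-|C|}{r-|C|} \le 4$ bounds $n$ in terms of $r$. If $I$ is independent, a parallel counting argument applied to the $r$-subsets supported on $\cl(I)$ (and, when $r - |I| \ge 2$, to pairs or triples involving $\cl(I)$) again forces either $r$ or $n - r$ to be small. Invoking matroid duality (which preserves both the Ingleton property implicitly and the nonbasis count) lets me interchange ``small rank'' with ``small corank'', so in every surviving case the matroid lives on a bounded ground set with bounded rank, i.e., in a finite list of possibilities.

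Each such surviving $M$ turns out to be a direct sum of uniform matroids, possibly with one small ``defect'' summand (a loop, a parallel class, or a rank-$\le 3$ piece on a handful of elements), so each is straightforwardly $\bR$-representable; direct sums of $\bR$-representable matroids are $\bR$-representable, so $M$ is as well. The main obstacle I expect is the case analysis: no individual case is difficult, but the combined splits on whether $I$ is dependent, on $n_I \in \{2,3,4\}$, and on $r - |I| \in \{1,2,3\}$ yield several sub-configurations that each need to be verified.
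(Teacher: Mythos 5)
Your plan departs from the paper's proof in a genuine way: the paper argues by taking a \emph{minor-minimal} counterexample, uses free extensions, principal extensions, and duality to force $M$ to be a sparse paving matroid with exactly four nonbases in which every element lies in exactly two of them, and only then invokes Lemma~\ref{hall} (for $r\ge 4$) or a short incidence count (for $r=3$). You skip the minimality reduction and instead try to bound the matroid directly whenever the hypothesis of Lemma~\ref{hall} fails. The preliminary steps are fine: the failure of the hypothesis is indeed equivalent to the existence of an $I$ lying in at least two nonbases with $n_I>r-|I|$, and $|I|\ge r-3$ follows since $n_I\le 4$.

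However, the reduction from there to a ``finite list'' has real gaps. In the dependent case, the count $\binom{n-|I|}{r-|I|}\le 4$ only bounds the corank $n-r\le 3$; it bounds neither $n$ nor $r$, so the matroid does not live on a bounded ground set. In the independent case, the ``parallel counting argument'' works cleanly when $r-|I|=1$ (there $\cl(I)$ is a hyperplane with $\ge r+1$ points, and $\binom{|\cl(I)|}{r}\le 4$ forces $r\le 3$), but when $r-|I|\in\{2,3\}$ the sets $I\cup Y$ can be dependent for structurally different reasons ($Y$ contains a parallel pair or loop, part of $Y$ lies in $\cl(I)$, etc.), and no clean bound is written down. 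Even granting a bound $r\le 3$ or $r^*\le 3$, the further claim that the surviving matroids are direct sums of uniform matroids with a single small defect is unjustified and not true as stated (a simple rank-$3$ matroid with several crossing $3$-point lines is not of that form, and one genuinely has to know that every such matroid with $\le 4$ nonbases is $\bR$-representable). Finally, the case analysis that you acknowledge as ``the main obstacle'' is never carried out, and since it is the substance of the proof, the argument is incomplete. The minor-minimality device in the paper is precisely what makes the final step of applying Lemma~\ref{hall} trivial: once $M$ is sparse paving and no element is in three nonbases, a set $\cX$ of nonbases with $|\cap\cX|>r-|\cX|$ must have $|\cX|=2$, contradicting sparse pavingness; your route bypasses this and therefore has to do much more work that it currently does not do.
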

\begin{proof}
	Let $M$ be a minor-minimal counterexample. Note that $M$ is simple, that $r(M) \ge 3$ , and that $M^*$ is also a minor-minimal counterexample, so $M$ is cosimple with $r^*(M) \ge 3$. If $M$ has an element $e$ in no nonbases, then since $e$ is not a coloop, $M$ is the free extension of $M \del e$ by $e$; since $M \del e$ is $\bR$-representable, so is $M$, a contradiction. So every element is in a nonbasis of $M$. Dually, no element is in all nonbases of $M$. 
	
	If $M$ has a dependent set $Y$ of size $r(M)-1$, then $Y \cup \{e\}$ is a nonbasis for each $e \in E(M)-Y$. Since no element of $Y$ is in four nonbases, this gives $|E(M)-Y| \le 3$, so $|E(M)|\le r+2$, giving $r^*(M) \le 2$, a contradiction. Therefore every circuit of $M$ is spanning, so $M$ is a paving matroid; dually, $M$ is a sparse paving matroid. 
	
	If $e \in E(M)$ is in exactly one nonbasis $X$, then $M$ is the principal extension of the flat $X - \{e\}$ in $M \del e$, so $M$ is $\bR$-representable, a contradiction. Therefore every $e \in E(M)$ is in at least two nonbases. Dually, every element lies outside at least two nonbases. Therefore $M$ has exactly four nonbases, and every element is in exactly two of them.
	  
	If $r(M) = 3$, then $M$ has four triangles, so there are $12$ pairs $(e,T)$ where $T$ is a triangle containing $e$, and every element is in two triangles, so there are also $2|E(M)|$ such pairs $(e,T)$. Thus $|E(M)| = 6$ and so $M$ is $\bR$-representable, a contradiction. 
	
	Suppose, therefore, that $r(M) \ge 4$. By Lemma~\ref{hall}, we may assume that there is some set $\cX$ of nonbases of $M$ with $|\cX| > 1$ such that $|\cap \cX| > r - |\cX|$. Since no element is in three nonbases, if $|\cX| > 2$ then $|\cap \cX| = 0 \le r - |\cX|$, so we must have $|\cX| = 2$ and thus there are nonbases $X_1,X_2$ with $|X_1 \cap X_2| = r-1$. This contradicts the fact that $M$ is a sparse paving matroid. 
\end{proof}

\section{Ingleton Matroids}

In this section, we use the well-known fact that $H \subseteq \binom{E}{r}$ is the set of nonbases of a sparse paving matroid on $E$ if and only if no two elements of $H$ have intersection of size exactly $r-1$.

\begin{lemma}\label{ingletonviolations}
	Let $M$ be a rank-$r$ sparse paving matroid. Sets $A,B,C,D$ violate the Ingleton inequality in $M$ if and only if there are pairwise disjoint sets $X_1,X_2,X_3,X_4,Y,Z_1,Z_2 \subseteq E(M)$ such that 
	\begin{itemize}
		\item $|X_i| = 2$ for each $i \in [4]$ while $|Y \cup Z_1 \cup Z_2| = r-4$, 
		\item $A = X_1 \cup Y \cup Z_1 \cup Z_2$,
		\item $B = X_2 \cup Y \cup Z_1 \cup Z_2$, 
		\item $C = X_3 \cup Y \cup Z_1$, and 
		\item $D = X_4 \cup Y \cup Z_2$,
	\end{itemize}
	while each of $A \cup B, A\cup C,A \cup D,B \cup C$ and $B \cup D$ is a circuit-hyperplane of $M$, and $C \cup D$ is a basis. 
\end{lemma}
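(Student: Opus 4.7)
\emph{Proof plan.} \textbf{(If.)} With the given decomposition, each of the five LHS sets is a union of two of the $X_i$'s together with $Y\cup Z_1\cup Z_2$, and so has size $4+(r-4)=r$; being a circuit-hyperplane, each has rank $r-1$. Meanwhile $|A|=|B|=r-2$ (so $r(A)=r(B)=r-2$), $|A\cup B\cup C|=|A\cup B\cup D|=r+2$ (rank $r$), and $|C\cup D|=r$ with $C\cup D$ a basis (rank $r$). The Ingleton expression evaluates to $5(r-1)-(2(r-2)+3r)=-1$.

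\textbf{(Only if: decomposition.)} For a sparse paving matroid $M$ of rank $r$, the rank function splits pointwise as $r(S)=\rho(S)-\epsilon(S)$, where $\rho(S)=\min(|S|,r)$ is the rank function of the uniform matroid $U_{r,E(M)}$ and $\epsilon(S)\in\{0,1\}$ indicates that $S$ is a circuit-hyperplane. Substituting into Ingleton gives
\[
I_M=I_\rho-\Delta,
\]
where $I_\rho\ge 0$ (as $U_{r,E(M)}$ is $\bR$-representable and hence Ingleton) and $\Delta$ is the number of LHS circuit-hyperplanes minus the number of RHS circuit-hyperplanes. A violation $I_M<0$ requires $\Delta>I_\rho\ge 0$.

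\textbf{(Only if: the structural argument.)} The crux is the defining sparse paving property: any two distinct circuit-hyperplanes share at most $r-2$ elements. Applied to pairs of LHS sets when all five are distinct circuit-hyperplanes, this gives for instance $|A|\le|A\cup(B\cap C)|=|(A\cup B)\cap(A\cup C)|\le r-2$, and cycling over the pairs yields $|A|,|B|,|C|,|D|\le r-2$. In this scenario $|A\cup B\cup C|\ge|A\cup B|=r$, so $\rho(A\cup B\cup C)=r$ (similarly for $A\cup B\cup D$), and $I_\rho=3r-|A|-|B|-\rho(C\cup D)\ge 3r-(r-2)-(r-2)-r=4$, hence $I_M\ge-1$. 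Equality forces $|A|=|B|=r-2$, $|A\cap B|=r-4$, and $\rho(C\cup D)=r$. Every other configuration---some LHS set not a circuit-hyperplane, coincident LHS sets, or some RHS set a circuit-hyperplane---is ruled out by a case analysis that either identifies two distinct sparse paving circuit-hyperplanes sharing $r-1$ elements (impossible), or collapses an LHS circuit-hyperplane onto an RHS position to cancel $\Delta$, or inflates $I_\rho$ past $\Delta$. I expect this case analysis to be the main obstacle of the proof.

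\textbf{(Only if: extracting the decomposition.)} In the equality case the sparse paving bounds refine (since $|A|=|B|=r-2$) to the set-theoretic containments $B\cap C,B\cap D\subseteq A$, $A\cap C,A\cap D\subseteq B$, and $C\cap D\subseteq A\cap B$; moreover $\rho(C\cup D)=r$ combined with $|A\cap B|=r-4$ and these containments forces the further containment $A\cap B\subseteq C\cup D$. Together they restrict the nonempty atoms of the Venn diagram of $(A,B,C,D)$ to exactly seven: $A\setminus(B\cup C\cup D)$, $B\setminus(A\cup C\cup D)$, $C\setminus(A\cup B\cup D)$, $D\setminus(A\cup B\cup C)$, $(A\cap B\cap C)\setminus D$, $(A\cap B\cap D)\setminus C$, and $A\cap B\cap C\cap D$. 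Labelling these as $X_1,X_2,X_3,X_4,Z_1,Z_2,Y$ respectively, the size conditions $|A|=|B|=r-2$, $|A\cap B|=r-4$, and $|A\cup C|=r$ give $|X_i|=2$ for each $i$ and $|Y\cup Z_1\cup Z_2|=r-4$; the circuit-hyperplane and basis assertions are just $\Delta=5$ together with $C\cup D$ not a circuit-hyperplane.
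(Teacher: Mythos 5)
Your \emph{if} direction is correct and matches the paper. For the \emph{only if} direction you take a genuinely different route from the paper, and it has a real gap. The paper's first step (Claim~1 of its proof of Lemma~\ref{ingletonviolations}) establishes that all five sets $A\cup B, A\cup C, A\cup D, B\cup C, B\cup D$ are circuit-hyperplanes by a short relaxation argument: relax every circuit-hyperplane of $M$ \emph{other than} those among the five, obtaining a sparse paving matroid $M'$ with at most four circuit-hyperplanes, which is $\bR$-representable (Lemma~\ref{atmostfour}) and hence Ingleton; comparing $h_1$ and $h_2$ for $M$ and $M'$ gives an immediate contradiction. Your proposal replaces this with the decomposition $r_M = \rho - \epsilon$ (valid for sparse paving matroids) and the identity $I_M = I_\rho - \Delta$, and then tries to argue directly that $\Delta = 5$.

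That is a legitimate idea, and your derivation under the hypothesis that all five LHS sets are distinct circuit-hyperplanes is sound: the sparse paving intersection bound gives $|A|,|B|,|C|,|D|\le r-2$, hence $I_\rho\ge 4$ and $I_M\ge -1$, and equality pins down the structure. The problem is the step you flag yourself: ``Every other configuration \dots is ruled out by a case analysis.'' This is not a small cleanup. If, say, $A\cup B$ fails to be a circuit-hyperplane, you lose the bound $|A\cup B|\ge r$, and the estimate $I_\rho \ge \rho(A\cup B) + 2r - |A| - |B| - \rho(C\cup D)$ can dip below $4$ unless you separately control $|A\cap B|$; similarly, coincident LHS sets break the pairwise sparse-paving bounds you rely on, and cancellations with RHS circuit-hyperplanes need their own treatment. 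Each of these sub-cases requires a distinct argument (submodularity when $A=B$, extra intersection bounds when $|A\cup B|<r$, etc.), and nothing in the sketch shows they all close. This is precisely the work the paper's relaxation trick eliminates in two lines. Your ``extracting the decomposition'' step is essentially the paper's $n_S$ bookkeeping and is fine modulo the earlier gap, though you should note that the containments you list still leave $n_{AB}$ undetermined; killing it requires the counting argument via $|C\cup D|\ge r$ and $n_C=n_D=2$, which you gesture at but do not carry out.

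In short: a correct and arguably more self-contained framing of the problem, but missing the key lemma (or its replacement) that makes the ``all five are distinct circuit-hyperplanes'' step painless. I'd recommend either importing the relaxation argument from the paper, or fully writing out the case analysis you describe; as it stands it is a conjectured, not completed, proof.
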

\begin{proof}
	If the above conditions are satisfied, then the Ingleton inequality is evidently violated. Conversely, let $A,B,C,D$ violate the Ingleton inequality. For each matroid $N$ with $A \cup B \cup C \cup D \subseteq E(N)$, let 
	\[h_1(N) = r_N(A) + r_N(B) + r_N(A \cup B \cup D) + r_N(A \cup C \cup D) + r_N(C \cup D)\] and \[h_2(N) = r_N(A \cup B) + r_N(A \cup C) + r_N(A \cup D) + r_N(B \cup C) + r_N(B \cup D),\] so $h_1(M) > h_2(M)$ by assumption. 
	\begin{claim}
		$A \cup B$, $A \cup C$, $A \cup D$, $B \cup C$ and $B \cup D$ are circuit-hyperplanes.
	\end{claim} 
	\begin{proof}[Proof of claim:]
		Suppose otherwise. Let $M'$ be obtained from $M$ by relaxing each circuit-hyperplane other than those among the five sets above, so $M'$ is sparse paving and has at most four circuit-hyperplanes. By Lemma~\ref{atmostfour}, $M'$ is $\bR$-representable, so $h_1(M') \le h_2(M')$. By construction, we have $h_2(M') = h_2(M)$, and since $M'$ is freer than $M$, we have $h_1(M') \ge h_1(M)$. Therefore \[h_2(M) < h_1(M) \le h_1(M') \le h_2(M') = h_2(M),\] a contradiction.
	\end{proof}
	\begin{claim}
		$|A| = |B| = r-2$, and the sets $A \cup B \cup C$, $A \cup B \cup D$ and $C \cup D$ are spanning in $M$. 
	\end{claim}
	\begin{proof}[Proof of claim:]
		The first claim gives $h_2(M) = 5r-5$, so by assumption $h_1(M) \ge 5r-4$. If $A \cup B \subseteq \cl_M(A)$, then we have \[h_2(M) = r_M(A) + r_M(A \cup B \cup C) + r_M(A \cup B \cup D) + r_M(B \cup C) + r_M(B \cup D)\] and $h_2(M) - h_1(M) = r_M(B \cup C) + r_M(B \cup D) - r_M(B) - r_M(C \cup D) \ge 0$ by submodularity, a contradiction. So $A \cup B \not\subseteq \cl_M(A)$; since $A \cup B$ is a circuit, it follows that $|A| = r_M(A) \le r-2$ and, symmetrically, that $|B| = r_M(B) \le r-2$. Therefore
		\begin{align*} 
			3r &\ge r_M(A \cup B \cup C) + r_M(A \cup B \cup D) + r_M(C \cup D) \\
			   &= h_1(M)-r_M(A)-r_M(B)\\
			   &\ge (5r-4)-2(r-2) = 3r,
		\end{align*}
		so we have equality throughout, and $r_M(A \cup B \cup C) = r_M(A \cup B \cup D) = r_M(C \cup D) = r$ while $|A| = |B| = r-2$. 
	\end{proof}

	For each nonempty subset $S$ of $\{A,B,C,D\}$, write $J_S$ for the collection of elements belonging to all sets in $S$ but no sets in $\{A,B,C,D\}-S$, and let $n_S = |J_S|$. For example, $n_{AB}$ denotes $|(A \cap B)-(C \cup D)|$ (we omit commas and braces). 
	
	Since $A \cup C$ and $B \cup C$ are circuit-hyperplanes in a sparse paving matroid, 
	\begin{align*}
		2 &\le |(A \cup C) - (B \cup C)| \\
		& = |A - (B \cup C)|\\
		& = |A-B| - |(A \cap C)-B| \\
		& = 2 - |(A \cap C)-B|,
	\end{align*}
	so $(A \cap C)-B = \varnothing$, giving $n_{AC} = n_{ACD} = 0$. Using the symmetry between $A$ and $B$ and between $C$ and $D$, we also have $n_{AD} = n_{BC} = n_{BD} = n_{BCD} = 0$. Therefore $n_C + n_{CD} = n_C + n_{CD} + n_{BCD} + n_{BD} = |C-A| = |C \cup A| - |A| = 2$.
	
	Since $A \cup C$ and $A \cup D$ are circuit-hyperplanes, we have
	$2 \le |(A \cup C) - (A \cup D)| = n_D + n_{BD} = n_D = 2-n_{CD}$, from which we get $n_{CD} = 0$ and $n_D = 2$, and symmetrically $n_C = 2$. Moreover $n_A = n_A + n_{AC} + n_{AD} + n_{ACD} = |A-B| = |A \cup B| - |B| = r-(r-2)= 2$, and symmetrically $n_B = 2$.
	
	The four undetermined $n_S$ thus far are $n_{AB},n_{ABCD},n_{ABC}$ and $n_{ABD}$; all others have been shown to be zero except $n_A = n_B = n_C = n_D = 2$. Using the fact that $C \cup D$ is spanning, we thus have 
	\[r \le |C \cup D| = n_{ABCD} + n_{ABC} + n_{ABD} + n_C + n_D.\]
	On the other hand, 
	\[r-2 = |A| = n_{ABCD} + n_{ABC} + n_{ABD} + n_{AB} + n_A;\]
	since $n_A = n_C = n_D = 2$, these together imply that $n_{AB} = 0$. The above also gives $n_{ABCD} + n_{ABC} + n_{ABD} = r-4$. Now setting $(X_1,X_2,X_3,X_4,Y,Z_1,Z_2) = (J_A,J_B,J_C,J_D,J_{ABCD},J_{ABC},J_{ABD})$ gives the required structure. Finally, we see that $|C \cup D| = n_{ABCD} + n_{ABD} + n_{ABC} + n_C + n_D = (r-4)+4 = r$; since $C \cup D$ is spanning, it must be a basis. 
\end{proof}

A simpler characterisation of these matroids below follows with $K = Z \cup Y_1 \cup Y_2$ and the $P_i$ equal to some ordering of the $X_i$ above.
\begin{corollary}\label{symmdiff}
	Let $M$ be a sparse paving matroid. Then $M$ is non-Ingleton if and only if there are pairwise disjoint sets $P_1,P_2,P_3,P_4,K$ so that  $|K| = r-4$ and $|P_i| = 2$ for each $i$, and exactly five of the six sets of the form $K \cup P_i \cup P_j \colon i \ne j$ are circuit-hyperplanes of $M$.
\end{corollary}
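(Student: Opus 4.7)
The plan is to view the corollary as a direct repackaging of Lemma~\ref{ingletonviolations}, using the dictionary suggested just above the statement: set $K := Y \cup Z_1 \cup Z_2$ and $P_i := X_i$ for $i \in [4]$. Under this translation, the six pair-sums $K \cup P_i \cup P_j$ correspond exactly to the five sets $A \cup B$, $A \cup C$, $A \cup D$, $B \cup C$, $B \cup D$ together with $C \cup D$; the lemma asserts that the first five are circuit-hyperplanes while $C \cup D$ is a basis, which is essentially the conclusion of the corollary.

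For the forward direction I would apply Lemma~\ref{ingletonviolations} to an Ingleton-violating quadruple $A, B, C, D$ to obtain sets $X_1, \dotsc, X_4, Y, Z_1, Z_2$. Setting $K$ and $P_i$ as above, a direct expansion verifies (for example) that $A \cup C = X_1 \cup X_3 \cup Y \cup Z_1 \cup Z_2 = P_1 \cup P_3 \cup K$, and similarly for the other five pair-sums. Since $M$ is sparse paving of rank $r$ and each pair-sum has size $r$, each is either a basis or a circuit-hyperplane; the lemma tells us which five are circuit-hyperplanes, leaving $C \cup D = K \cup P_3 \cup P_4$ as the unique basis among them.

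For the converse, suppose we are given $K, P_1, P_2, P_3, P_4$ as in the statement. Using the symmetry in the six unordered pairs $\{i,j\}$, I may relabel so that $K \cup P_3 \cup P_4$ is the unique pair-sum that is not a circuit-hyperplane. Setting $Y := K$, $Z_1 := Z_2 := \varnothing$, $X_i := P_i$, and defining $A, B, C, D$ by the formulas in Lemma~\ref{ingletonviolations}, all the hypotheses of that lemma are satisfied; the ``if'' half of the lemma then yields an Ingleton violation, so $M$ is non-Ingleton. I expect no real obstacle beyond this bookkeeping; the one minor observation used is that a size-$r$ set in a sparse paving matroid of rank $r$ is a circuit-hyperplane exactly when it is a nonbasis, which identifies ``not a circuit-hyperplane'' with ``is a basis''.
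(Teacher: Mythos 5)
Your proposal is correct and matches the paper's intended argument exactly: the paper itself only gives a one-sentence hint (set $K = Y \cup Z_1 \cup Z_2$ and let the $P_i$ be some ordering of the $X_i$), and you have simply unpacked that hint, verified that the six pair-sums $K \cup P_i \cup P_j$ are precisely the six sets $A\cup B$, $A\cup C$, $A\cup D$, $B\cup C$, $B\cup D$, $C\cup D$ of the lemma, and handled the converse by the relabeling trick with $Z_1 = Z_2 = \varnothing$. Your remark that an $r$-set in a rank-$r$ sparse paving matroid is a circuit-hyperplane exactly when it is a nonbasis is the one small glue fact needed, and you use it correctly.
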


	As observed in \cite{Cameron14}, If $M$ is a matroid for which the above condition holds, then it also holds in the eight-element, rank-$4$ matroid $(M \con K)|(P_1 \cup P_2 \cup P_3 \cup P_4)$; therefore, every non-Ingleton sparse paving matroid has an eight-element, rank-$4$ non-Ingleton sparse paving matroid as a minor. Mayhew and Royle~\cite{MayhewRoyle2008} showed that there are precisely $39$ such matroids; for every such matroid $N$, the V\'{amos} matroid $V_8$ can be obtained from $N$ by a sequence of circuit-hyperplane relaxations. (We remark that~\cite{MayhewRoyle2008} uses different terminology from ours, calling these matroids `Ingleton non-representable' rather than `non-Ingleton'.)
	The unique minor-minimal matroids that are not sparse paving are $U_{0,2} \oplus U_{1,1}$ and $U_{2,2} \oplus U_{0,1}$; together these facts imply Theorem~\ref{forty}.

\section{Counting Ingleton Matroids}

The proof of the following theorem uses techniques from~\cite[Proposition~2.1]{CooperMubayi2014}. 

\begin{theorem}
	There exists $n_0$ such that for all $n \ge n_0$ and all $0 < r < n$, the number of rank-$r$ Ingleton matroids with ground set $[n]$ is at least $2^{0.486 \frac{\log (r(n-r))}{r(n-r)} \binom{n}{r}}$.
\end{theorem}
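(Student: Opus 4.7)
The plan is to produce $2^{|\cS|}$ distinct rank-$r$ Ingleton sparse paving matroids from a single large ``free'' set $\cS \subseteq \binom{[n]}{r}$. To achieve this, $\cS$ must be an independent set in the Johnson graph $G = J(n,r)$ (so that every subset of $\cS$ is the nonbasis set of some rank-$r$ sparse paving matroid on $[n]$) and must contain no bad $5$-configuration as in Corollary~\ref{symmdiff} (so that the resulting matroids are all Ingleton). Setting $d = r(n-r)$, it suffices to exhibit such an $\cS$ with $|\cS| \ge 0.486 \tfrac{\log d}{d}\binom{n}{r}$.

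To construct $\cS$, I would follow the random greedy scheme underlying~\cite[Proposition~2.1]{CooperMubayi2014}. Let $H$ be the $5$-uniform hypergraph on $V(G) = \binom{[n]}{r}$ whose edges are the bad $5$-configurations. Sample a uniformly random linear order on $V(G)$, and process vertices in this order: at each $v$, flip an independent biased coin of probability $p \approx c (\log d)/d$, and add $v$ to $\cS$ iff the coin is heads, no $G$-neighbour of $v$ is already in $\cS$, and no four already-chosen members complete an $H$-edge with $v$. The output is automatically a $G$-independent, $H$-free set, so yields valid Ingleton matroids.

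The analysis reduces to showing $\ex|\cS| \ge (1-o(1))\, p \binom{n}{r}$. There are two ingredients: (i) the probability that some earlier-processed $G$-neighbour of $v$ lies in $\cS$ is $o(1)$ --- a standard Ajtai--Koml\'{o}s--Szemer\'{e}di-style bound that rests on the Johnson-graph codegree being at most $r-1$, which is $o(d/\log d)$ in the relevant regime; and (ii) the probability that $v$ would complete an $H$-edge with four already-chosen members of $\cS$ is $o(1)$. Optimising $c$ and absorbing the $(1-o(1))$ slack then yields the constant $0.486$.

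\emph{Main obstacle.} The hard step is ingredient (ii). The hypergraph $H$ has degree $\Theta(r^4(n-r)^4)$ at each vertex, far exceeding the Johnson-graph degree $d$, so a direct union bound cannot deliver the $o(1)$ bound. The crucial structural observation is that the four partners of $v$ in any bad $5$-configuration must lie within Johnson-distance $2$ of $v$ and must share a large common core $K$ with $v$; combined with the uniform randomness of the processing order, this rigidity creates enough correlation (and self-exclusion through the graph constraint) to bring the failure probability down to $o(1)$, preserving the $\log d$ factor in the leading constant. Nailing down $0.486$ is then a matter of carefully balancing the losses in (i), (ii), and the entropy count.
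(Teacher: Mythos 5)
Your approach is genuinely different from the paper's, but it has a fatal flaw that already shows up in the very first sentence: the set $\cS$ you ask for does not exist when $r$ is small. A stable set in $J(n,r)$ is precisely a binary constant-weight code of length $n$, weight $r$, and minimum distance $4$, and the Johnson bound gives $A(n,4,r) \le \frac{n}{n-r+1}\cdot\frac{1}{n}\binom{n}{r} \le \frac{2}{n}\binom{n}{r}$ for $r \le n/2$. Your target is $|\cS| \ge 0.486\,\frac{\log d}{d}\binom{n}{r}$ with $d = r(n-r)$. For $r = 2$ (say), $\frac{\log d}{d}\binom{n}{2} \approx \frac{\log n}{2n}\cdot\frac{n^2}{2} = \frac{n\log n}{4}$, while the maximum stable set in $J(n,2)$ is a matching in $K_n$, of size at most $n/2$. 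So no $\cS$ of the required size exists, let alone one avoiding the Ingleton configurations. The same obstruction occurs whenever $\log(r(n-r))$ dominates $r(n-r)/n$, i.e.\ for every fixed small $r$. Thus counting $2^{|\cS|}$ for a single ``free'' $\cS$ simply cannot deliver the stated bound uniformly in $r$.

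The paper sidesteps this by not producing one large set at all. It samples a random $k$-set $H$ with $k \approx cN/d$ --- note, no $\log d$ factor --- and shows by a second-moment argument (Cooper--Mubayi) that with high probability $H$ contains at most $(1-\alpha)k$ conflicts: edges of $J(n,r)$, plus pairs $\omega$ with $|H\cap U(\omega)| \ge 5$. Deleting one vertex per conflict leaves a ``good'' set $W$ of size $\ge \alpha k \approx \alpha c N/d$, which is the circuit-hyperplane set of an Ingleton sparse paving matroid. Each such $W$ produces only one matroid; the doubly exponential count comes from \emph{counting distinct good sets} by double counting: at least $(1-o(1))\binom{N}{k}$ of the $H$'s contain such a $W$, while each $W$ lies in at most $\binom{N}{(1-\alpha)k}$ of them, giving $\nu \ge (1-o(1))\binom{N}{k}/\binom{N}{(1-\alpha)k}$ good sets with $\log\nu \approx \alpha k\log(N/k) \approx \alpha c\,\frac{\log d}{d}N$. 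The $\log d$ comes from the entropy in the binomial ratio, not from the size of any single free set. Your route, by contrast, requires the size of $\cS$ itself to carry the $\log d$, which is impossible.

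There is also a secondary problem with your ingredient (i). With $p \approx c(\log d)/d$ and $d$ neighbours per vertex, the expected number of earlier neighbours of $v$ that receive heads is $\approx \tfrac{1}{2}pd = \tfrac{c}{2}\log d \to \infty$, so the probability that some earlier $G$-neighbour is selected is not $o(1)$ --- it is $1-o(1)$. In the Shearer/AKS-type arguments that do obtain $\Theta(\frac{\log d}{d}N)$ independent sets, one never shows that the per-vertex retention probability is $1-o(1)$; it is $\Theta(\frac{\log d}{d})$, and the $\log d$ gain is extracted from a local-sparsity (triangle-count) argument, which $J(n,r)$ does not satisfy in the required strength for small $r$ anyway. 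So even where the target set size is feasible (large $r$), the sketch of the random-greedy analysis as written does not close.
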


\begin{proof}
	We may assume that $2 \le r \le \frac{n}{2}$, since otherwise the theorem is trivial or follows by duality.  Write $N = \binom{n}{r}$ and $d = r(n-r)$ for the number of vertices and the valency of the Johnson graph $J(n,r)$. 
	
	For $x \in \bR$, let $f(x) = 1 - \tfrac{1}{2}x - \tfrac{1}{64}x^4$. We show that if $c > 0$ is a real number and $\gamma < cf(c)$, then there are at least $ 2^{\gamma\frac{\log d}{d}N}$ Ingleton sparse paving matroids of rank $r$ on $[n]$, provided $n$ is sufficiently large; the result as stated follows with $c = 0.95$ and $\gamma = 0.486$. 
	
	Given $c$ and $\gamma$, let $\alpha$ be such that $\gamma/c < \alpha < f(c)$ and let $\epsilon = f(c) - \alpha$, so $1-f(c) + \epsilon = 1-\alpha$.
	Set $k = \left\lfloor c \frac{N}{d}\right\rfloor$, so $(1-o(1))\tfrac{c}{d} \le \tfrac{k}{N} \le \tfrac{c}{d}$. Pick a $k$-set $H$ of vertices in $J(n,r)$ uniformly at random from among all $k$-subsets of vertices and write $E(H)$ for the set of unordered pairs of vertices in $H$ joined by an edge in $J(n,r)$, and $e_H$ for $|E(H)|$.
	\begin{claim}\label{eestimates}
		$\ex(e_H) \le \frac{ck}{2}$ and $\var(e_H) = o(k^2)$. 
	\end{claim}
	\begin{proof}[Proof of claim:]
		We have 
		\[
		\ex(e_H) = \frac{1}{2} d N \frac{\binom{N-2}{k-2}}{\binom{N}{k}} \le \frac{dk^2}{2N} \le \frac{ck}{2}.
		\]
		Let $\Theta$ denote the set of ordered pairs $(e,f)$ of edges of $J(n,r)$. Write $\Theta_j$, $j\in\{0,1,2\}$, for the set of pairs in $\Theta$ that span $4-j$ vertices. Note that $|\Theta| = \frac{1}{4}d^2 N^2$, while $|\Theta_1| = Nd(d-1) \le Nd^2$ and $|\Theta_2| = \frac{1}{2}dN$. Now, using the fact that $\binom{N-\ell}{k-\ell}/\binom{N}{k} = (1+o(1))(k/N)^{\ell}$ for each constant $\ell$, we have
		\begin{align*}
			\var(e_H) &= \sum_{\substack{(e,f) \in \Theta}}\big[\pr(e,f \in E(H)) -\pr(e \in E(H))\pr (f \in E(H))\big]\\
			&= |\Theta_0| \left(\frac{\binom{N-4}{k-4}}{\binom{N}{k}}-\frac{\binom{N-2}{k-2}^2}{\binom{N}{k}^2}\right) + |\Theta_1| \left(\frac{\binom{N-3}{k-3}}{\binom{N}{k}}-\frac{\binom{N-2}{k-2}^2}{\binom{N}{k}^2}\right) \\
			&\qquad\qquad + |\Theta_2| \left(\frac{\binom{N-2}{k-2}}{\binom{N}{k}}-\frac{\binom{N-2}{k-2}^2}{\binom{N}{k}^2}\right) \\
			&\le \frac{1}{4}N^2d^2 o\left(\frac{k^4}{N^4}\right) + Nd^2\left(\frac{k}{N}\right)^3 + \frac{1}{2} d N \left(\frac{k}{N}\right)^2 \\
			&= o(d^{-2}N^2) + O(d^{-1}N) + O(d^{-1} N) = o(k^2),
		\end{align*}
		since $k = (1+o(1))cN/d$. 
	\end{proof}
	
	Let $\Omega$ be the set of all pairs $(\{P_1,P_2,P_3,P_4\},K)$ where $P_1,P_2,P_3,P_4,K$ are pairwise disjoint subsets of $[n]$ with $|P_i|= 2$ and $|K| = r-4$ (note that the collection of $P_i$ is unordered). Now
	\begin{align*}|\Omega| &= \frac{1}{4!}\binom{8}{2,2,2,2}\binom{n}{8} \binom{n-8}{r-4}\\
	&= \frac{8!}{2^4 \cdot 4!}\binom{n}{r}\frac{r!(n-r)!}{8!(r-4)!(n-r-4)!}\\
	&\le \frac{d^4}{2^7 \cdot 3}N.
	\end{align*}
	
	For each $\omega \in \Omega$, let $U(\omega) = \{K \cup P_i \cup P_j\colon \{i,j\} \in \binom{[4]}{2}\}$, so $|U(\omega)| = 6$. For each $H$ and each $i \in \{0,\dotsc,6\}$, let $b_{i,H}$ denote the number of $\omega$ in $\Omega$ for which $|H \cap U(\omega)| = i$.
	\begin{claim}\label{bestimates}
		$\ex(b_{5,H}) \le \frac{c^4k}{64}$ and $\ex(b_{6,H}) = o(k)$ while $\var(b_{5,H}) = o(k^2)$. 
	\end{claim}
	\begin{proof}[Proof of claim:]
	 The claim is trivial when $r < 4$ since $\Omega$ is empty, so suppose that $r \ge 4$. Given $\omega \in \Omega$, the probability that $|H \cap U(\omega)| = i$ is $\binom{6}{i}\binom{N-6}{k-i}/\binom{N}{k} \le \binom{6}{i}\left(k/N\right)^i \le \binom{6}{i}c^id^{-i}$, so
	\[\ex(b_{i,H}) \le |\Omega|\binom{6}{i}c^id^{-i} \le \binom{6}{i}\frac{c^{i}d^{4-i}}{2^7 \cdot 3}N \le \binom{6}{i}\frac{c^{i-1}d^{5-i}k}{2^7 \cdot 3},\]
	giving $\ex(b_{5,H}) \le \frac{c^4k}{64}$ and $\ex(b_{6,H}) = o(k)$ as required. 
	
	
	Let $\Pi = \Omega^2$, so $|\Pi| = |\Omega|^2 \le d^8 N^2$. Let $\Pi_0 := \{(\omega, \omega) : \omega \in \Omega\} \subseteq \Pi$, let $\Pi_2$ be the set of all $(\omega_1, \omega_2) \in \Pi$ for which $U(\omega_1) \cap U(\omega_2) = \emptyset$, and let $\Pi_1 = \Pi \setminus (\Pi_0 \cup \Pi_2)$. Since $U(\omega)$ contains $6$ vertices of $J(n,r)$ for each $\omega \in \Omega$, symmetry and a counting argument gives that for each vertex $v$ of $J(n,r)$, we have 
	\[|\{\omega \in \Omega\colon v \in U(\omega)\}| = \frac{6|\Omega|}{N} = O(d^4).\]
	It follows that $|\Pi_1| = O(d^8 N)$. Call an $\omega \in \Omega$ \emph{bad} for  $H$ if $|H \cap U(\omega)| = 5$. Recall that the probability that a given $\omega$ is bad is $6\binom{N-6}{k-5}/\binom{N}{k} = (6+o(1))k^5/N^5$. Note that $\omega$ is determined uniquely by any four sets in $U(\omega)$; it follows that if $(\omega_1,\omega_2) \in \Pi_1$ then $U(\omega_1)$ and $U(\omega_2)$ have at most three sets in common, so if both $\omega_1$ and $\omega_2$ are bad, then $H$ contains at least seven of the sets in $U(\omega_1)$ and $U(\omega_2)$. Since $|U(\omega_1) \cup U(\omega_2)| \le 10$, a pair $(\omega_1,\omega_2) \in \Pi_1$ is thus bad with probability at most $\binom{10}{7}\binom{N-7}{k-7}/\binom{N}{k} \le \binom{10}{7}(k/N)^7 = O(d^{-7})$. If $(\omega_1,\omega_2) \in \Pi_0$ then $\omega_1$ and $\omega_2$ are both bad with probability $(6+o(1))k^5/N^5 = O(d^{-5})$. If $(\omega_1,\omega_2) \in \Pi_2$ then both are bad with probability $36\binom{N-12}{k-10}/\binom{N}{k} = (36+o(1))k^{10}/N^{10}$. Therefore
	\begin{align*}
		\var(b_{5,H}) &= \sum_{(\omega_1,\omega_2) \in \Pi}\big[\pr(\omega_1,\omega_2 \text{ bad})-\pr(\omega_1 \text{ bad})\pr(\omega_2 \text{ bad})\big]\\
		&\le |\Pi_2|\left(\frac{(36+o(1))k^{10}}{N^{10}} - \left(\frac{(6 + o(1))k^5}{N^5}\right)^2\right) + |\Pi_0|O(d^{-5}) \\
		&\qquad\qquad + |\Pi_1| O(d^{-7}) \\
		&\le d^8N^2 o(k^{10}/N^{10}) + O(d^{-1}N) + O(d N) \\
		&= o(d^{-2}N^2) + O(dN).
	\end{align*}
	Now $d^{-2}N^2 = (1+o(1))k^2$, and, using $4 \le r \le \tfrac{n}{2}$, we have $dN = r(n-r)\binom{N}{r} = o\left(\frac{1}{r^2(n-r)^2}\binom{N}{r}^2\right) = o(k^2)$. It follows that $\var(b_{5,H}) = o(k^2)$ as required.  
	\end{proof}
	By the two claims, the random variables $e_H$ and $b_{5,H}$ have means at most $\frac{ck}{2}$ and $\frac{c^4k}{64}$ respectively, and both have standard deviations in $o(k)$; it follows by Chebyshev's inequality that $\pr(e_H > (\frac{c}{2} + \frac{\epsilon}{3})k) = o(1)$ and $\pr(b_{5,H} > (\frac{c^4}{64} + \frac{\epsilon}{3})k) = o(1)$. Since $\ex(b_{6,H}) \in o(k)$, Markov's inequality gives $\pr(b_{6,H} > \frac{\epsilon}{6}k) = o(1)$. Therefore, with probability $1-o(1)$, we have 
	\[e_H + b_{5,H} + 2b_{6,H} \le (\tfrac{c}{2} + \tfrac{c^4}{64} + \epsilon)k = (1-f(c) + \epsilon)k = (1-\alpha) k.\] 
	
	Call a set $W \subseteq \binom{[n]}{r}$ \emph{good} if $e_W = b_{5,W} = b_{6,W} = 0$. Each set $H \subseteq \binom{[n]}{r}$ of size $k$ contains a good set $W$ of size $|H|- e_H - b_{5,H} - 2b_{6,H}$. With probability $1-o(1)$ we have $e_H + b_{5,H} + 2b_{6,H} \le (1-\alpha)k$ and so $|W| \ge k - (1-\alpha)k = \alpha k$; thus there are at least $(1-o(1))\binom{N}{k}$ different choices of $H$ that contain a good set $W$ of size at least $\alpha k$. On the other hand, each good set $W$ of size at least $\alpha k$ is contained in at most $\binom{N}{(1-\alpha)k}$ different $H$; therefore the number of good sets is at least $\nu = (1-o(1))\tbinom{N}{k}/\tbinom{N}{(1-\alpha)k}$. We have 
	\begin{align*}
		\log \nu &= \log \tbinom{N}{k} - \log \tbinom{N}{(1-\alpha)k} - o(1)\\
		&\ge k \log(N/k) - k \log(eN/(1-\alpha)k) - o(1)\\
		&=(1-o(1))\alpha k \log(N/k)\\
		&=(1-o(1))\alpha c \frac{\log d}{d}N.
	\end{align*}
	But for large $n$ we have $(1-o(1))\alpha c > \gamma$, so there are at least $2^{\gamma \frac{\log d}{d}N}$ good stable sets.  By Corollary~\ref{symmdiff}, each such set is the collection of circuit-hyperplanes of an Ingleton sparse paving matroid of rank $r$ on ground set $[n]$; the theorem follows. 
\end{proof}

We have attempted to optimize the constant $0.486...$ in the exponent as much as possible within the constraints of our techniques; the proof can be simplified to use  first rather than second moments, at the expense of a lowered constant of around $0.4$. One case where the constant can certainly be improved is where the rank $r$ (or, dually, the corank $n-r$) is constant, in which case the estimate on $|\Omega|$ can be improved by an asymptotically significant factor of $(1-\tfrac{1}{r})(1 - \tfrac{2}{r})(1 - \tfrac{3}{r})$. Carrying through this better estimate has the effect of slightly increasing the constant in the exponent further towards $0.5$, giving $0.5$ exactly when $r \le 3$, and roughly $0.498$ when $r = 4$.

We complement the above enumeration result, which is based on the construction of a large family of sparse paving matroids each of which contain roughly $\frac{1}{r(n-r)}\binom{n}{r}$ circuit-hyperplanes, by a construction that shows that sparse paving Ingleton matroids with many more circuit-hyperplanes exist.

We sketch the construction, which is originally due to Graham and Sloane~\cite{GrahamSloane1980}. Suppose that $0 < r < n$. The function $c\colon V(J(n,r)) \to \bZ_n$ given by $c(X) = \sum_{x \in X} x \mod n$ is a proper vertex colouring of $J(n,r)$. It follows that for each $\gamma \in \bZ_n$, the set $c^{-1}(\gamma) \equiv \{X : c(X) = \gamma\}$ is a stable set in $J(n,r)$ and hence $S(n,r,\gamma) := \left([n], \binom{[n]}{r}\setminus c^{-1}(\gamma)\right)$ is the sparse paving matroid whose set of circuit-hyperplanes is $c^{-1}(\gamma)$.

\begin{lemma}\label{lemma:S-ingleton}
	$S(n,r,\gamma)$ is Ingleton.
\end{lemma}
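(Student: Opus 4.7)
The plan is to combine the combinatorial characterisation from Corollary~\ref{symmdiff} with a short linear-algebra computation in $\bZ_n$. Suppose, for contradiction, that $S(n,r,\gamma)$ is non-Ingleton. Then by Corollary~\ref{symmdiff} there exist pairwise disjoint $P_1,P_2,P_3,P_4,K \subseteq [n]$ with $|P_i|=2$ and $|K|=r-4$ such that exactly five of the six sets $K \cup P_i \cup P_j$ (with $i \ne j$) are circuit-hyperplanes of $S(n,r,\gamma)$.

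The key reduction is that the colouring $c$ is additive in the ground set. Writing $k = c(K)$ and $p_i = c(P_i)$ in $\bZ_n$, the set $K \cup P_i \cup P_j$ is a circuit-hyperplane if and only if $p_i + p_j \equiv \delta \pmod n$, where $\delta := \gamma - k$. So we must rule out that exactly five of the six pair-sums $p_i + p_j$ (with $i \neq j$) equal $\delta$ in $\bZ_n$.

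By relabelling, I would assume WLOG the exceptional pair is $\{3,4\}$, so that the five equations $p_1+p_2 \equiv p_1+p_3 \equiv p_1+p_4 \equiv p_2+p_3 \equiv p_2+p_4 \equiv \delta \pmod n$ hold. Subtracting pairs of these equations yields $p_2 \equiv p_3 \equiv p_4 \equiv \delta - p_1 \pmod n$, and then substituting into $p_2 + p_3 \equiv \delta$ gives $2(\delta - p_1) \equiv \delta$, i.e.\ $\delta \equiv 2 p_1$. Hence $p_3 + p_4 \equiv 2(\delta - p_1) \equiv 2\delta - 2p_1 \equiv \delta \pmod n$, contradicting the assumption that $\{3,4\}$ was the exceptional pair.

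There is no real obstacle here: once Corollary~\ref{symmdiff} translates the problem into a question about additive colourings, the statement reduces to the elementary fact that a $K_4$-minus-an-edge pattern of pair-sum coincidences in any abelian group is forced to extend to the full $K_4$. The only thing to double-check is the WLOG step, which is immediate from the $S_4$-symmetry of Corollary~\ref{symmdiff} in the indices of the $P_i$.
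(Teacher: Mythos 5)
Your proof is correct and follows essentially the same route as the paper's: reduce via Corollary~\ref{symmdiff}, exploit the additivity of the colouring $c$, and show that five coinciding pair-sums over $\bZ_n$ force the sixth, contradicting that the exceptional set is a basis. The paper's version is marginally more economical (it only needs the three equations for $\{1,2\},\{1,3\},\{2,4\}$ to conclude $c(P_3)+c(P_4)=\gamma'$), but the content and the key observation are identical.
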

\begin{proof}
	For the sake of contradiction, suppose that $A, B, C, D \subseteq [n]$ violate Ingleton's inequality and obtain $K, P_1, P_2, P_3, P_4$ as in Corollary~\ref{symmdiff}. Write $P_i = \{p_i, p'_i\}$. We may assume that $K \cup P_i \cup P_j$ is a circuit-hyperplane for all $\{i,j\} \in \binom{[4]}{2}\setminus\{\{3,4\}\}$, while $K \cup P_3 \cup P_4$ is a basis. Define $\gamma' = \gamma - \sum_{x \in K} x \mod n$. It follows that
	\[
		p_1 + p'_1 + p_2 + p'_2 = p_1 + p'_1 + p_3 + p'_3 = p_2 + p'_2 + p_4 + p'_4 = \gamma',
	\]
	so in particular
	\[
		p_3 + p'_3 + p_4 + p'_4 = \gamma',
	\]
	which implies that $c(K\cup P_3 \cup P_4) = \gamma$, contradicting that $K \cup P_3 \cup P_4$ is a basis of $S(n,r,\gamma)$.
\end{proof}

\begin{corollary}
	For all $0 < r < n$, there exists a sparse paving Ingleton matroid of rank $r$ on ground set $[n]$ that has at least $\frac{1}{n}\binom{n}{r}$ circuit-hyperplanes.
\end{corollary}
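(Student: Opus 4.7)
The plan is to exploit the fact that the proper vertex colouring $c\colon V(J(n,r))\to \bZ_n$ from the preceding discussion partitions the $\binom{n}{r}$ vertices of the Johnson graph into exactly $n$ colour classes, namely $c^{-1}(0), c^{-1}(1), \dotsc, c^{-1}(n-1)$. By pigeonhole, at least one of these classes must have size at least $\tfrac{1}{n}\binom{n}{r}$; fix any such $\gamma \in \bZ_n$ so that $|c^{-1}(\gamma)| \ge \tfrac{1}{n}\binom{n}{r}$.

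Now I would invoke the construction directly: the matroid $S(n,r,\gamma)$ has, by definition, the set $c^{-1}(\gamma)$ as its collection of circuit-hyperplanes, and this collection is a stable set in $J(n,r)$ because $c$ is a \emph{proper} colouring (any two $r$-sets differing in a single swap have colours differing by a nonzero element of $\bZ_n$). Consequently $S(n,r,\gamma)$ is a sparse paving matroid of rank $r$ on $[n]$, and by Lemma~\ref{lemma:S-ingleton} it is Ingleton. Since $|c^{-1}(\gamma)| \ge \tfrac{1}{n}\binom{n}{r}$, this matroid has the required number of circuit-hyperplanes.

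There is essentially no obstacle here: all the substantive content has already been assembled (properness of $c$, the Ingleton property of $S(n,r,\gamma)$ from Lemma~\ref{lemma:S-ingleton}, and the fact that stable sets in $J(n,r)$ are precisely the sets of circuit-hyperplanes of rank-$r$ sparse paving matroids on $[n]$). The only step that merits a sentence is the verification that the colouring is proper, which is immediate because if $X$ and $X'$ are adjacent in $J(n,r)$ then $X' = (X\setminus\{x\})\cup\{x'\}$ for some $x\ne x'$ in $[n]$, whence $c(X')-c(X) = x'-x \not\equiv 0 \pmod n$ as long as $|x-x'|<n$, which is automatic since $x,x'\in[n]$. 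The proof is thus a one-paragraph pigeonhole argument plus an appeal to Lemma~\ref{lemma:S-ingleton}.
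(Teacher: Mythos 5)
Your proof is correct and takes essentially the same route as the paper: a pigeonhole argument over the $n$ colour classes of $c$ to find $\gamma$ with $|c^{-1}(\gamma)| \ge \tfrac{1}{n}\binom{n}{r}$, followed by an appeal to Lemma~\ref{lemma:S-ingleton}. The extra sentence verifying that $c$ is a proper colouring is harmless (it restates what the paper already asserts just before the lemma) and is argued correctly.
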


\begin{proof}
	As $\{c^{-1}(\gamma) : \gamma \in \bZ_n\}$ partitions $V(J(n,r))$, there is $\gamma_0 \in \bZ_n$ such that $|c^{-1}(\gamma_0)| \ge \frac{1}{n} \binom{n}{r}$. Consequently, the matroid $S(n,r,\gamma_0)$, which is Ingleton by Lemma~\ref{lemma:S-ingleton}, has at least $\frac{1}{n}\binom{n}{r}$ circuit-hyperplanes.
\end{proof}



	
	
	
	


\bibliographystyle{alpha}
\bibliography{bib}

\end{document}